\newcommand{\A}{\mathcal{A}}
\newcommand{\N}{\mathbb{N}}
\newcommand{\F}{\mathcal{F}}
\renewcommand{\H}{\mathcal{H}}
\renewcommand{\S}{\mathcal{S}}
\newcommand{\HH}{\mathbb{H}}
\renewcommand{\SS}{\mathbb{S}}
\newcommand{\ZZ}{\mathbb{Z}}
\newcommand{\PP}{\mathcal{P}}
\newcommand{\intr}{\mathrm{int}}
\newcommand{\ex}{\mathrm{ex}}
\newcommand{\rk}{\mathrm{rk}}
\newtheorem{theorem}{Theorem}
\newtheorem{lemma}{Lemma}
\newtheorem{proposition}{Proposition}
\title{A topological proof of Ky Fan's covering lemma}
\author{Bogdan Chornomaz\footnote{Department of Mathematics, Technion, Israel. \href{mailto:markyz.karabas@gmail.com}{\bf markyz.karabas@gmail.com}. Funded by the European Union (ERC, GENERALIZATION, 101039692). Views and opinions expressed are, however, those of the author(s) only and do not necessarily reflect those of the European Union or the European Research Council Executive Agency. Neither the European Union nor the granting authority can be held responsible for them.}}
\date{July 29, 2025}
\begin{document}

\maketitle

\begin{abstract}
    We give an intuitive combinatorial proof of Ky Fan's covering lemma based on the Borsuk-Ulam theorem. We then show how this approach can be generalized to Ky Fan's covering lemma for several linear orders. 
\end{abstract}

\section{Ky Fan's covering lemma}\label{sec-ky-fan}
A famous Borsuk-Ulam (BU) theorem from algebraic topology has two counterparts, well-known to be equivalent to it: Tucker's lemma in combinatorics and Lusternik–Schnirelmann (LS) theorem in terms of covers. Ky Fan's (KF) theorem (main theorem in~\cite{kyfan52}) is a strengthening of Tucker's lemma, also stated and proved in combinatorial terms.\footnote{See also~\cite{nyman13} for a further discussion about the landscape around these results.} 
In the same paper, as a corollary, Ky Fan proved a closed cover version of this theorem (KFCL\footnote{Here we tweaked our abbreviation scheme in order to avoid unnecessary parallel with Kentucky Fried Chicken, which the reader probably did not have in the first place before reading this footnote.}), noting that, as a special case, it implies the LS theorem: 
\begin{theorem}[Ky Fan's covering lemma, Theorem~2 in~\cite{kyfan52}]\label{th-kyfan-covers}
	Let $\F$ be a finite antipodal-free closed cover of the $n$-dimensional sphere $\SS^n$, and let $\leq$ be a linear order on $\F$. Then there is $x\in \SS^n$ witnessing an alternating pattern of length $n+2$ for $\F$ and $\leq$, that is, such that there are $F_1 < \dots < F_{n+2} \in \F$ such that $x\in (-1)^{i-1}F_i$, for $i=1, \dots, n+2$. 
\end{theorem}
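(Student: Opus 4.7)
I would prove the theorem by the classical discretize--label--limit strategy, reducing it to Ky Fan's combinatorial antipodal theorem (equivalent to Borsuk--Ulam) and then squeezing one extra term out of a secondary label to upgrade the alternating chain of length $n+1$ produced by one simplex to the length $n+2$ that the covering lemma demands.

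\textbf{Separation and antipodal labeling.} Enumerate $\F = \{F_1 < \dots < F_m\}$. Closedness and antipodal-freeness give $\delta_i := d(F_i, -F_i) > 0$, so $\delta := \tfrac{1}{2}\min_i \delta_i > 0$. Fix an antipodally symmetric triangulation $T$ of $\SS^n$ of mesh at most $\delta$. For each vertex $v$, let $i(v) := \min\{i : v \in F_i\}$ and $j(v) := \min\{j : v \in -F_j\}$; these are well-defined (since $\F$ covers $\SS^n$) and distinct (by antipodal-freeness). Set
\[
\lambda(v) := \begin{cases} +i(v), & i(v) < j(v), \\ -j(v), & j(v) < i(v). \end{cases}
\]
A direct check gives $\lambda(-v) = -\lambda(v)$, and the choice of $\delta$ precludes any edge of $T$ carrying complementary labels $+k,-k$.

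\textbf{Combinatorial Ky Fan and passage to the limit.} Ky Fan's combinatorial theorem on $\SS^n$ then produces an $n$-simplex of $T$ whose vertices $v_0,\dots,v_n$ carry labels $(-1)^\ell k_\ell$ with $k_0 < k_1 < \dots < k_n$ (after possibly replacing the simplex by its antipode so that the leading sign is $+$). By the definition of $\lambda$, the last vertex $v_n$ also satisfies $v_n \in (-1)^{n+1} F_l$ for $l := \max(i(v_n), j(v_n)) > k_n$. Taking a sequence of triangulations with mesh tending to $0$ and extracting a subsequence on which the tuple $(k_0,\dots,k_n,l)$ is constant (possible since $\F$ is finite), the simplices collapse to a common limit point $x \in \SS^n$; closedness of each $F_i$ then delivers $x \in (-1)^\ell F_{k_\ell}$ for $\ell = 0,\dots,n$ and $x \in (-1)^{n+1} F_l$, which is the alternating chain $k_0 < k_1 < \dots < k_n < l$ of length $n+2$.

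\textbf{Main obstacle.} The essential content is Ky Fan's combinatorial theorem, which is where the Borsuk--Ulam input truly enters; the rest is standard bookkeeping. The one subtlety easy to overlook is the off-by-one between what the combinatorial theorem produces from a single simplex ($n+1$ labels) and what the covering lemma requires ($n+2$), bridged by reading off a secondary label at the end vertex. Any ``intuitive combinatorial proof'' must find a clean way to handle this extension.
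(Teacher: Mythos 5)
Your proof is correct, but it takes a genuinely different route from the paper's. You reduce the statement to Ky Fan's combinatorial antipodal theorem: triangulate $\SS^n$ symmetrically with small mesh, assign each vertex $v$ the antipodal label that records whichever of $v$ or $-v$ is first caught by the cover, verify there are no complementary edges, extract from the combinatorial theorem an alternating $n$-simplex with labels $+k_0,-k_1,\dots,(-1)^n k_n$, upgrade the chain length by one via the secondary label $l=\max(i(v_n),j(v_n))>k_n$ at the last vertex, and pass to the limit as the mesh shrinks. This is essentially the route Ky Fan himself used to derive the covering lemma (his Theorem~2) from his combinatorial theorem (his Theorem~1), and your bookkeeping is sound; the secondary-label trick you flag as the subtle step is indeed the right fix for the off-by-one between the $n{+}1$ labels on a simplex and the $n{+}2$ the statement demands. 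The paper's proof is different in spirit: rather than invoking the combinatorial strengthening of Tucker's lemma as a black box, it builds a continuous equivariant map $u\mapsto\rho(\chi_u)$ directly from $\SS^n$ to the order complex $C(\H_{2,m})$ of alternating patterns of length at most $m$, identifies that complex with $\SS^{m-2}$, and applies Borsuk--Ulam to conclude $m\geq n+2$. The payoff of the paper's route is that it needs only BU, not the full combinatorial theorem (the paper explicitly notes it is unclear whether the covering lemma easily recovers the combinatorial statement), and the continuous construction abstracts cleanly into \Cref{lem-Ky-Fan-generalized}, which then drives the multi-order generalizations of \Cref{sec-several-orders}. Your route buys a concrete discretize-and-limit argument at the cost of pulling in the strictly stronger combinatorial input.
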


We do not know if KFCL easily implies his original combinatorial statement, although we suspect that it does not. However, KFCL is perhaps more accessible and is usually sufficient for most applications in combinatorics; see, for example, \cite{simonyi2006local}.\footnote{As a downside, let us note that there is an interest in having constructive proofs of lemmas in combinatorial topology (see~\cite{lz06,prescott05}), while our proof is a step in the opposite direction.}

Here, we give an easy proof of KFCL (\Cref{th-kyfan-covers}) via BU. Apart from a certain didactic value, its proof follows a scheme, explicated afterwards in \Cref{lem-Ky-Fan-generalized}, which we find interesting in its own right. We then use this scheme to prove an asymptotically tight generalization of KFCL for several linear orders (Theorems~\ref{th-kyfan-covers2} and~\ref{th-kyfan-covers3}). We note that Ky Fan's result is one of the standard tools in topological combinatorics that has been around since 50s, and, naturally, there have been numerous generalizations. Most of those, however, are concentrated around either generalizing to $\ZZ_q$-actions~\cite{meunier06,ziegler02}, or to more complicated simplicial complexes~\cite{sarkaria90,zivaljevic10}. The direction we take is, to our knowledge, quite orthogonal to those, and we are not aware of a close counterpart.

\begin{proof}[Proof (Of KFCL via BU)]
	Let us start by briefly explaining the intuition. We say that $u\in \SS^n$ \emph{witnesses} $[k]$ if it witnesses an alternating pattern of length $k$, as defined in the statement of the theorem. That is, if there are $F_1 < \dots < F_{k} \in \F$ such that $u\in (-1)^{i-1}F_i$. Similarly, we say that $u$ \emph{witnesses} $[-k]$ if $-u$ witnesses $[k]$, alternatively, if there are $F_1 < \dots < F_{k} \in \F$ such that $u\in (-1)^{i}F_i$. The following observations are trivial: 
	\begin{itemize}
		\item any $u\in\SS^n$ witnesses either $[2]$ or $[-2]$;
		\item if $u\in\SS^n$ witnesses $[k]$ or $[-k]$ then it witnesses $[l]$ and $[-l]$ for all $2\leq l < k$;
		\item if $u\in\SS^n$ witnesses $[k]$ and $[-k]$ then it witnesses either $[k+1]$ or $[-(k+1)]$;
		\item if $u\in \SS^n$ witnesses $[k]$, then $-u$ witnesses $[-k]$.
	\end{itemize}
	From the above, it is obvious that for every $u$ there is a unique maximal alternating pattern $p(u)$, of the form  $[\pm l]$, witnessed by $u$; here maximality is in the sense of the Hasse diagram $\H_{2, \infty}$ in \Cref{fig-hasse} below. Moreover, $\H_{2, \infty}$ is equipped with an order-preserving involution $[l] \mapsto [-l]$ that makes $p$ equivariant, that is, such that, trivially, $p(-u) = -p(u)$.
	
	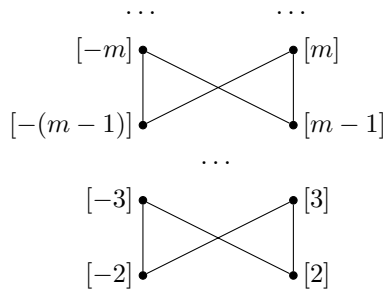
\begin{figure}[hbt]
	    \centering
		\begin{tikzpicture}
	[bpoint/.style={inner sep = 1.0pt, circle,draw,fill=black},
	wpoint/.style={inner sep = 1.7pt, circle,draw,fill=white}]

	\node[bpoint](-2) at (-1, 0) {};
	\node[bpoint](2) at (1, 0) {};
	\node[bpoint](-3) at (-1, 1) {};
	\node[bpoint](3) at (1, 1) {};
	\node[bpoint](-m-1) at (-1, 2) {};
	\node[bpoint](m-1) at (1, 2) {};
	\node[bpoint](-m) at (-1, 3) {};
	\node[bpoint](m) at (1, 3) {};
	\node at (0, 1.5) {$\dots$};

	\node at (-1, 3.5) {$\dots$};
	\node at (1, 3.5) {$\dots$};

	\node[left] at (-2) {$[-2]$};
	\node[right] at (2) {$[2]$};
	\node[left] at (-3) {$[-3]$};
	\node[right] at (3) {$[3]$};
	\node[left] at (-m-1) {$[-(m-1)]$};
	\node[right] at (m-1) {$[m-1]$};
	\node[left] at (-m) {$[-m]$};
	\node[right] at (m) {$[m]$};
	
	\draw(-2)--(-3)--(2)--(3)--(-2);
	\draw(-m-1)--(-m)--(m-1)--(m)--(-m-1);
\end{tikzpicture}
	    \caption{Hasse diagram $\H_{2, \infty}$ of partial order of alternating patterns.}
	    \label{fig-hasse}
	\end{figure}
	
	The idea of the proof is then to ``smoothify'' the discrete map $p\colon \SS^n \rightarrow \H_{2, \infty}$ into a continuous equivariant map $\varphi$ to the chain complex $\HH^m = C(\H_{2, m})$, where $\H_{2, m}$ is the order-ideal of elements of $\H_{2, m}$ below $[\pm m]$. Using the well-known fact that $\HH^m$ is homeomorphic to $\SS^{m-2}$, the standard BU theorem then implies $m\geq n+2$. We will now work through the details of this scheme.
	
	Let us define a \emph{sample}\footnote{To an extent, the terminology is derived from that of learning theory; the relevant textbook would be~\cite{shays14}. However, the parallel is only by analogy.} as a function from $\F$ to $\{-1, 1, *\}$, where $*$ is treated as ``undefined''. The set of samples is naturally partially ordered by extension, that is, $s_1 \leq s_2$ if for any $F\in \F$, $s_1(F) = s_2(F)$ whenever $s_1(F)\neq *$; in this case, we say that $s_1$ is a \emph{subsample} of $s_2$.  For $u\in \SS^n$, the sample $s(u)$ of $s$ is a characteristic function of $u$ belonging to $F\in \F$, that is, $s(u)(F) = 1$ if $u\in F$, $-1$ if $u\in -F$, and $*$ otherwise. Note that as every $F\in \F$ is antipodal-free, this is well-defined. Each sample trivially maps to a unique maximal alternating pattern that it witnesses, and we denote this map from samples to patterns by~$\rho$. Note that, formally, for an arbitrary pattern $s$, $\rho(s)$ can be $[0]$ or any $[\pm k]$ for $k\in \{1, \dots, |\F|\}$. However, by the above properties of the covers, for $u\in \SS^n$, $\rho\circ s(u) \in \H_{2, \infty}$, that is, $\rho\circ s(u) = [\pm k]$ for $k \geq 2$.
	
	It will also be convenient to identify a sample $s$ with a \emph{consistent} subset $s' \subseteq \F\times \{\pm 1\}$, where consistent means that for every $F\in \F$, at most one of $(F, -1)$ and $(F, 1)$ is in $s'$. Formally, $s' = \{(F, y)\in \F\times \{\pm 1\} ~|~ s(F) = y\}$ and, in the other direction, $s(F) = +1$ if $(F, +1)\in s'$, $-1$ if $(F, -1)\in s'$, and $*$ otherwise. Note that in terms of $s'$, the partial order on samples is simply by inclusion. Further on, with an abuse of notation, we identify thus defined $s$ and~$s'$.

	The above representation is useful as it enables us to define a continuous ``sample function'' on $\SS^n$. Let us fix $\varepsilon > 0$ and, for $u\in \SS^n$, let us define a function $\chi_u\colon \F\times \{\pm 1\} \rightarrow [0, 1]$ as 
	\begin{align*}
		\chi_u(F, y) = \begin{cases}
				0, & d(x, yF) \geq \varepsilon \\
				1 - d(x, yF)/\varepsilon, & 0\leq d(x, yF)\leq \varepsilon.
			\end{cases}
	\end{align*}
	Alternatively, $\chi_u(F, y) = \max(1 - d(x, yF)/\varepsilon, 0)$. Note also that $s(u) = \{(F, y)~|~\allowbreak\chi_u(F, y) = 1\}$. Now, trivially, $\chi_u$ can be canonically decomposed as $\chi_u = \alpha_1 s_1 + \dots \alpha_q s_q$ for $s(u) = s_1 < s_2 < \dots < s_q$ with $\alpha_1 + \dots + \alpha_q = 1$. Here we identify a sample $s_i$ with its characteristic function on $\F\times \{\pm1\}$. This is illustrated in \Cref{fig-decomposition} below.

	\begin{figure}[hbt]
	    \centering
		\begin{tikzpicture}
	[bpoint/.style={inner sep = 1.5pt, circle,draw,fill=black},
	wpoint/.style={inner sep = 1.5pt, circle,draw,fill=white}]

	\draw [decorate,decoration={brace,amplitude=5pt,raise=-4ex}]
  		(2.75,1.15) -- (4.25,1.15) node[midway,yshift=-0.25em]{$s(u)$};
  
	\begin{scope}
		\node at (0,0) {\large$\chi_u:$};
		\draw (0.5, 0)--(5.5, 0);
		\node[wpoint] (1) at (1,0) {};
		\node[bpoint] (2) at (2,0) {};
		\node[bpoint] (3) at (3,0) {};
		\node[wpoint] (4) at (4,0) {};
		\node[bpoint] (5) at (5,0) {};
		
		\node[above] at (1) {$0.5$};
		\node[above] at (2) {$0.3$};
		\node[above] at (3) {$1$};
		\node[above] at (4) {$1$};
		\node[above] at (5) {$0.9$};
	\end{scope}
	\node at (3, -0.5) {\Huge$=$};

	\begin{scope}[yshift=-1cm]
		\draw (0.5, 0)--(5.5, 0);
		\node at (0,0) {$0.3$};
		\node[wpoint] (1) at (1,0) {};
		\node[bpoint] (2) at (2,0) {};
		\node[bpoint] (3) at (3,0) {};
		\node[wpoint] (4) at (4,0) {};
		\node[bpoint] (5) at (5,0) {};
	\end{scope}
	\begin{scope}[yshift=-1.5cm]
		\draw (0.5, 0)--(5.5, 0);
		\node at (-0.5,0) {$+$};
		\node at (0,0) {$0.2$};
		\node[wpoint] (1) at (1,0) {};
		\node[bpoint] (3) at (3,0) {};
		\node[wpoint] (4) at (4,0) {};
		\node[bpoint] (5) at (5,0) {};
	\end{scope}
	\begin{scope}[yshift=-2cm]
		\draw (0.5, 0)--(5.5, 0);
		\node at (-0.5,0) {$+$};
		\node at (0,0) {$0.4$};
		\node[bpoint] (3) at (3,0) {};
		\node[wpoint] (4) at (4,0) {};
		\node[bpoint] (5) at (5,0) {};
	\end{scope}
	\begin{scope}[yshift=-2.5cm]
		\draw (0.5, 0)--(5.5, 0);
		\node at (-0.5,0) {$+$};
		\node at (0,0) {$0.1$};
		\node[bpoint] (3) at (3,0) {};
		\node[wpoint] (4) at (4,0) {};
	\end{scope}
	\node at (3, -3.25) {\Huge$\xrightarrow{~~~~~~\rho~~~~~~}$};
	\node at (3, -4) {$0.3[4] + 0.2[4] + 0.4[-3] + 0.1[-2]$};
	
\end{tikzpicture}
	    \caption{Decomposition of $\chi_u$.}
	    \label{fig-decomposition}
	\end{figure}
	
	The final part of the argument relies on the following claim. Its proof is a simple topological exercise (although let us refer to Section~3.1 in~\cite{chase2024local}). \underline{Claim}: \emph{For $\varepsilon$ small enough\footnote{Here $\varepsilon$ depends on $\F$, but not on $u\in \SS^n$.}, for every $u\in \SS^n$ there is $v\in \SS^n$ such that the support $s_\varepsilon(u)$ of $\chi_u$ is a subsample of $s(v)$.} 
	
	Note that fixing $\varepsilon$ as in the claim makes $s_\varepsilon(u)$ consistent for all $u\in \SS^n$. Thus, if we denote the poset of samples by $\S_\F$, then $u\mapsto \chi_u$ defines a continuous equivariant map from $\SS^n$ to the chain complex $C(\S_\F)$, where the antipodality on $\S_\F$ is by flipping the signs in the sample; as we do not allow an empty sample, this antipodality is fixed-point free. 
	
	Let now $m$ be such that for every $u\in \SS^n$, $\rho \circ s(u) =[\pm k]$ for $k\leq m$. Alternatively, this can be stated as $\rho \circ s(u)\in \H_{2, m}$. The claim then implies that $u\mapsto \rho \circ \chi_u$, with $\rho$ extended by linearity, is a continuous equivariant map from $\SS^n$ to $\HH^m = C(\H_{2,m}) \cong \SS^{m-2}$, finishing the proof.
\end{proof}

\section{Abstracting the approach}\label{sec-abstracting}

Note that the above construction of the continuous map $u\mapsto \chi_u$ from $\SS^n$ to $C(\S_\F)$ is rather universal. We only rely on the fact that the sets in $\F$ are closed and antipodal free, which makes this map to be well-defined\footnote{We also can relax the condition that $\F$ is a cover in order for it to omit the empty sample, and only require that $\F \cup -\F$ is a cover; however, we stick with $\F$ being a cover for consistency.}. 
The specifics of the statement then come from the map $\rho$ that maps samples to alternation patterns. The conditions that $\rho$ is equivariant and that $C(\H_{2, m})$ is a sphere are rather restrictive and can be relaxed: To use BU, it is enough that $\rho$, when lifted to the chain complex, does not collapse antipodal points, which we can require even when the chain complex of the target poset is not equipped with antipodality. And, if we are willing to lose a factor of $1/2$, we can only track the dimension of the target. Let us now explicate some terminology that would enable us to state a certain generalization of \Cref{th-kyfan-covers} above.

For a domain $X$, which we assume to be finite, we define a \emph{sample}~$s$ over $X$ as a partial function from $X$ to $\{\pm1\}$; formally, $s\colon X\rightarrow \{-1, 1, *\}$, where~$*$ is treated as ``undefined''. We denote the sample with an empty domain by~$0$. The support of $s$ is a subset of $X$ where $s\neq *$. We define a poset~$\S^0_X$, or simply $\S^0$ if the domain $X$ is clear from the context, as the set of all samples over~$X$ ordered by extension. The antipodality map $s\mapsto -s$ on $\S^0$ is defined by flipping the signs of values of $s$, that is, $-s(x) = -s(x)$ whenever $s(x)\in \{\pm1\}$, and $-s(x)=*$ whenever $s(x)=*$. We define $\S$ as the poset of all nonempty samples, that is, $\S = \S^0 - 0$. 
Trivially, the antipodal map on $\S$ is fixed-point free.

From now on, by an \emph{$n$-cover} we will mean a finite, closed, antipodal-free cover of $\SS^n$. As before, for a fixed $n$-cover $\F$, for $u\in \SS^n$ we define the \emph{sample} $s_u$, or $s(u)$, as $s_u \colon \F\rightarrow \{\pm1, *\}$ where $s_u(F) = 1$ if $u\in F$, $-1$ if $u\in -F$, and $*$ otherwise. Note that, by the conditions on the $n$-cover, no $u$ belongs to both~$F$ and $-F$, and so the definition is consistent. Moreover, $s(u)\neq 0$, in other words, $s(u)\in \S_\F$. 

We now define $H^\ex_\F\subseteq \S_\F$ as $H^\ex_\F = \{s(u)~|~u\in \SS^n\}$. Moreover, let us define $H^\intr(\F)$ to be the interval closure of $H^\ex(\F)$, that is, $H^\intr_\F = \{s\in \S_\F~|~\text{there are } s', s''\in H^\ex_\F \text{ s.t. } s'\leq s\leq s''\}$. Notationwise, the superscripts $\ex$ and $\intr$ stand for ``exact'' and ``interval''; as usual, we omit $\F$ in the subscript whenever it is clear from the context. Trivially, $H^\ex \subseteq H^\intr$. Note that, by construction, $s(-u) = -s(u)$, and so all $H^\ex$ and $H^\intr$ are antipodal closed.

Recall that, for a poset $\PP$, the rank $\rk(p)$ of an element $p\in \PP$ is the maximal length of a strictly increasing chain to $p$, where the length of a chain is the number of its elements minus one. 
For example, the rank of the elements $[\pm m]$ in $\H_{2, \infty}$  in \Cref{fig-hasse} is $m-2$. 

\begin{lemma}\label{lem-Ky-Fan-generalized}
	For an $n$-cover $\F$, let $\rho\colon H^\intr_\F\rightarrow \PP$ be a monotone map into a poset $\PP$ such that $\rho(s) \neq \rho(-s)$ for all $s\in H^\intr_\F$. Then there is $u\in \SS^n$ such that $\rho\circ s(u)\in \PP$ has rank at least $(n + 1)/2$. 
	
	Furthermore, if $\PP = \H_{2, \infty}$, then there is $u\in \SS^n$ such that the rank of $\rho\circ s(u)\in \PP$ is at least $n$.
\end{lemma}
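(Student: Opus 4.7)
The plan is to reuse the continuous-map scheme from the proof of KFCL. For sufficiently small $\varepsilon > 0$, write the decomposition $\chi_u = \sum_i \alpha_i s_i$ with $s_1 = s(u) < s_2 < \dots < s_q = s_\varepsilon(u)$, all lying in $H^\intr$ (by the claim cited in the KFCL proof), and set $\Phi(u) := \rho\circ\chi_u = \sum_i \alpha_i \rho(s_i)$. By monotonicity, $\rho(s_1)\leq\dots\leq\rho(s_q)$ is a chain in $\PP$. Writing $R := \max_{v\in\SS^n}\rk(\rho(s(v)))$, and noting that $\rho(s_q)\leq\rho(s(v))$ for some $v$ forces $\rk(\rho(s_q))\leq R$, $\Phi$ is realized as a continuous map $\SS^n\to C(\PP_{\leq R})$ into the order complex of $\PP_{\leq R} = \{p\in\PP\colon\rk(p)\leq R\}$, a simplicial complex of dimension at most $R$.

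The crucial observation, shared by both parts, is that $\Phi(u)\neq\Phi(-u)$ for every $u$. Collapsing the (possibly-degenerate) chain $\rho(s_1)\leq\dots\leq\rho(s_q)$ to its distinct values, $\Phi(u)$ lies in the relative interior of a simplex of $C(\PP_{\leq R})$ whose vertex set is a chain in $\PP_{\leq R}$ with minimum $\rho(s(u))$. If $\Phi(u)=\Phi(-u)$ held, the carrier simplices (and hence their minima) would coincide, forcing $\rho(s(u))=\rho(s(-u))=\rho(-s(u))$, which contradicts the hypothesis $\rho(s) \neq \rho(-s)$.

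For the furthermore part ($\PP=\H_{2,\infty}$), if $R<n$ then $\Phi$ takes values in $C(\H_{2,n+1})\cong\SS^{n-1}\hookrightarrow\R^n$, and the classical Borsuk--Ulam theorem produces some $u$ with $\Phi(u)=\Phi(-u)$, a contradiction; hence $R\geq n$. For the main part, form the doubled map $\Psi(u):=(\Phi(u),\Phi(-u))\colon\SS^n\to C(\PP_{\leq R})^2$, equivariant under the coordinate swap; by the crucial observation, $\Psi$ avoids the diagonal, factoring through the free $\ZZ_2$-CW complex $C(\PP_{\leq R})^2\setminus\Delta$ of dimension at most $2R$. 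The standard Borsuk--Ulam bound, ``an equivariant $\SS^n\to Y$ into a free $\ZZ_2$-CW complex $Y$ requires $n\leq\dim Y$'', then yields $n\leq 2R$, i.e., rank at least $(n+1)/2$. The main technical step is the simplex-carrier argument underlying the crucial observation; modulo that, both parts reduce to routine Borsuk--Ulam applications.
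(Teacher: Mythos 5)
Your argument is essentially the paper's: the same continuous map $u\mapsto\rho(\chi_u)$, the same carrier-simplex reasoning that antipodal points are not collapsed, and the same dimension-counting finish. Two genuine differences, one cosmetic and one that leaves a small hole. (1) For the non-collapsing step, the paper tracks the \emph{top} of the carrier chain: it shows that $\rho\circ s_\varepsilon(u)$ and $\rho\circ s_\varepsilon(-u)$ differ, and then (using monotonicity) that the larger of the two cannot lie anywhere on the other chain, so the carrier simplices differ. You instead track the \emph{bottom}: the minimal vertex of the carrier of $\Phi(u)$ is $\rho(s(u))$, and $\rho(s(u))\neq\rho(-s(u))=\rho(s(-u))$. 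Both are correct; yours is a clean equivalent. (2) For the dimension bound, the paper delegates to~\cite{Jaw02}, while you unpack it via the doubled map $\Psi$ into the deleted product. This is a welcome, more self-contained version, but note two issues: $C(\PP_{\leq R})^2\setminus\Delta$ is an open subspace, not a CW complex, so to invoke the index bound you should either equivariantly retract onto the combinatorial deleted product $\{\sigma\times\tau : \sigma\cap\tau=\emptyset\}$ or argue via covering dimension of the free quotient; and more importantly, the bound you obtain this way is $n\leq 2R$, i.e.\ $R\geq n/2$, whereas the lemma asserts $R\geq(n+1)/2$. For odd $n$ these coincide, but for even $n$ your bound is weaker by one, so as written this does not prove the stated inequality; you would need the sharper input $2R\geq n+1$ that the paper attributes to~\cite{Jaw02}. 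The ``furthermore'' part, which sidesteps this by using $C(\H_{2,m+2})\cong\SS^m$ and the classical Borsuk--Ulam equalizer form, is fine and matches the paper.
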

We note that the information about the witnessed patterns relies on a particular choice of $\PP$ and $\rho$. For example, in the proof of \Cref{th-kyfan-covers}, $\PP$ was the poset of alternation patterns $\H_{2, \infty}$. The conclusion of \Cref{lem-Ky-Fan-generalized} then yielded $u\in \SS^n$ sitting sufficiently high in $\PP$, and thus witnessing sufficiently high alternating pattern. Alternatively, picking $\PP$ to be the poset of samples $\S$ itself would guarantee the existence of a point in $\SS^n$ that belongs to sufficiently many sets in the cover or their antipodals.

The second part of the claim relies on the knowledge of the special structure of $C(\H_{2, m})$; namely, that it is an $(m-2)$-sphere. We keep it mostly for consistency with \Cref{th-kyfan-covers}, and note additionally that it can be further clarified in terms of the coindex of $C(\PP)$. The bottom line, however, is that we can completely ignore its structure and only lose a factor of $2$.
\begin{proof}
	Let $s(u)$, $s_\varepsilon(u)$, and the map $u\mapsto \chi_u$ be exactly as in the proof of \Cref{th-kyfan-covers}. Recall that we argued there that $\chi_u$ is a formal convex combination of a chain $\zeta$ of samples in $S_\F$ between $s(u)$ and $s_\varepsilon(u) = \sup(\chi_u)$. Moreover, for $\varepsilon$ small enough, for every $u\in \SS^n$ there is $v\in \SS^n$ such that $s_\varepsilon(u) \leq s(v)$. Thus, $\zeta$ is a chain in $H^\intr_\F$, so $u\mapsto \rho(\chi_u)$ is a continuous map from $\SS^n$ to $C(\PP)$.
	
	It is easy to see that $-s_\varepsilon(u) = s_\varepsilon(-u)$. Monotonicity of $\rho$, together with the fact that $\rho\circ s_\varepsilon(u) \neq \rho\circ s_\varepsilon(-u)$, imply that either $\rho(t) \neq \rho \circ s_\varepsilon(u)$ for all $t\leq s_\varepsilon(-u)$ or, the other way round, $\rho(t) \neq \rho\circ s_\varepsilon(-u)$ for all $t\leq s_\varepsilon(u)$. This implies that the map $u\mapsto \rho(\chi_u)$ does not collapse antipodal points.
	
	If the maximal rank of $\rho \circ s_\varepsilon(u)$ is $m$, then the map $u\mapsto \rho(\chi_u)$ is actually to $C(\PP^{\leq m})$, where $\PP^{\leq m}$ is the subposet of $\PP$ of the elements of rank at most $m$. Trivially, the dimension of $C(\PP^{\leq m})$ is at most $m$. By~\cite{Jaw02}, this implies $2m \geq n+1$ (see also Theorem 3.1 in~\cite{chase2024local} for a statement more adjusted to our case). The proof is finished by taking, for $u$ witnessing $\rk \circ \rho \circ s_\varepsilon(u) = m$, $v\in \SS^n$ for which $s_\varepsilon(u) \leq s(v)$. Then $\rk \circ \rho \circ s(v) \geq \rk \circ \rho \circ s_\varepsilon(u) \geq m \geq (n+1)/2$, as needed.
	
	The second claim follows by observing that for $\PP = \H_{2, \infty}$, the poset $\PP^{\leq m} = \H_{2, m+2}$, and $C(\H_{2,m+2})$ is not only $m$-dimensional, but is actually an $m$-sphere. Hence, instead of~\cite{Jaw02}, we can use a regular Borsuk-Ulam to imply that ${m\geq n}$. 
\end{proof}

\section{Ky Fan's for several orders}\label{sec-several-orders}
After isolating the abstract core of the proof, it would be nice to illustrate its utility by, perhaps, proving something new along these lines, potentially involving some cool target posets. Theorems~\ref{th-kyfan-covers2} and~\ref{th-kyfan-covers3} below do just that by generalizing KFCL to several linear orders. Although we consider these results to be nice and neat, we have to admit that our primary goal was to find such illustrations of utility. And our main remaining question, if there is any, is whether something else can be milked out of this setup.\footnote{The author entertains the possibility of relating it to something like sample compression conjecture~\cite{fw95,lw86}, however all parallels do not go beyond some similarity in the vibe.}

The proof is split into two parts: \Cref{th-kyfan-covers2}, generalizing it to two orders, is followed by \Cref{th-kyfan-covers3}, which further generalizes it to an arbitrary number of orders. However, all proof ideas are already present in the two orders case, and further generalization is straightforward, even if somewhat technical.

\begin{theorem}[Ky Fan's covering lemma for two orders]\label{th-kyfan-covers2}
	For an $n$-cover $\F$, let $\leq_1$ and $\leq_2$ be two linear orders on $\F$. Then there is $u\in \SS^n$ witnessing an alternating monotone pattern of length at least $m = \left((n+1)/2\right)^{1/2}$ for both of those orders. 
	
	That is, there is $u\in \SS^n$ and $F_1, \dots, F_m\in \F$ such that:
	\begin{itemize}
		\item Either $u\in (-1)^{i-1}F_i$, for $i=1, \dots, m$ or $u\in (-1)^{i}F_i$, for $i=1, \dots, m$;
		\item Either $F_1\leq_1 F_2 \leq_1 \dots \leq_1 F_m$, or $F_1\geq_1 F_2 \geq_1 \dots \geq_1 F_m$, and similarly for $\leq_2$.
	\end{itemize}
    
    Moreover, this is asymptotically sharp: there is an $n$-cover such that the largest alternating monotone pattern with respect to it has length at most $\lceil\sqrt{n+2}\rceil$.
\end{theorem}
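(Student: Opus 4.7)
The plan is to prove the lower bound (witnessing a pattern of length at least $m=((n+1)/2)^{1/2}$) and the upper bound (asymptotic sharpness via an explicit cover) separately.

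For the lower bound, I would apply the first part of \Cref{lem-Ky-Fan-generalized} with a target poset $\PP$ tailored so that its rank grows quadratically in the length of a monotone bi-alternating pattern. Concretely, let $\PP$ consist of pairs $(\sigma, k)$, where $\sigma$ is a monotone bi-alternating pattern in $\F$ (alternating signs, monotone under both $\leq_1$ and $\leq_2$, in some direction) and $k \in \{1, \dots, |\sigma|\}$ is an auxiliary position index, and order $\PP$ lexicographically by length and then by $k$: $(\sigma, k) < (\sigma', k')$ iff $|\sigma| < |\sigma'|$, or $\sigma = \sigma'$ and $k < k'$. A top element with $|\sigma|=l$ then has rank of order $l^2$, so the guarantee rank $\geq (n+1)/2$ from the lemma translates into a bi-alternating pattern of length $l \geq m$. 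The map $\rho$ would send a sample $s$ to a canonically chosen maximum bi-alternating pattern together with its length index; since the natural antipodal involution on $\PP$ flips signs within $\sigma$ while fixing $k$, the condition $\rho(-s) \neq \rho(s)$ is immediate.

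The main obstacle is proving that $\rho$ is monotone under sample extension. The naive choice of ``the'' maximum bi-alternating pattern may flip direction or starting sign as $s$ grows, and unlike in the single-order setting the analogue of observation~(3) from the KFCL proof (``witnessing both $[k]$ and $[-k]$ forces $[k+1]$ or $[-(k+1)]$'') admits small counterexamples in the two-order case. I expect this step to require enriching the book-keeping --- for instance, tracking bi-alternating patterns in all four direction combinations simultaneously together with a tie-breaking rule carefully aligned with the order on $\PP$ --- so that a consistent canonical extension is always available.

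For the upper bound, I would give an explicit construction. Start from a standard tight $n$-cover $\F$ of size $n+2$ and a first order $\leq_1$ for which the maximum alternating pattern has length exactly $n+2$. Indexing $\F$ as $F_1 <_1 \dots <_1 F_{n+2}$, define $\leq_2$ via an Erd\H{o}s--Szekeres-tight staircase permutation: partition $\{1, \dots, n+2\}$ into $\lceil\sqrt{n+2}\rceil$ consecutive blocks of size at most $\lceil\sqrt{n+2}\rceil$, and reverse each block in $\leq_2$. The classical Erd\H{o}s--Szekeres argument then bounds any common monotone subsequence of $(\leq_1, \leq_2)$ by $\lceil\sqrt{n+2}\rceil$, and since every bi-alternating pattern is in particular such a common monotone subsequence, the claimed bound follows. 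The one additional check --- that points $u \in \SS^n$ whose sample is only partially alternating under $\leq_1$ do not outperform the fully alternating case --- is immediate, as the alternating-sign constraint can only further restrict the length.
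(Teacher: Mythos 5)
Your overall strategy — apply \Cref{lem-Ky-Fan-generalized} with a target poset whose rank grows quadratically in the pattern length, and prove sharpness by an explicit Erd\H{o}s--Szekeres block construction — is indeed the paper's plan, and your upper-bound sketch matches the paper's \Cref{prop-d-orders}. However, the step you yourself flag as the main obstacle, the monotonicity of $\rho$, is not merely delicate bookkeeping: with your choice of $\PP$ it genuinely fails, and no canonical tie-breaking rule can repair it. Your $\rho$ must select, for each sample $s$, a single maximum-length bi-alternating pattern $\sigma(s)$, and monotonicity forces: whenever $s_1 \le s_2$ and the maximum length does not increase, then $\sigma(s_1) = \sigma(s_2)$ (since distinct same-length patterns are incomparable in $\PP$). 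But a sample can admit several incomparable maximum-length patterns, and two incomparable subsamples $s_1, s_1' \le s_2$ may each force a \emph{different} such pattern — say $s_1$ realizes only $(A,B)$ and $s_1'$ realizes only $(C,D)$, while $s_2$ admits both and still has the same maximum length. Then whichever $\sigma(s_2)$ you pick, one of $\rho(s_1) \le \rho(s_2)$, $\rho(s_1') \le \rho(s_2)$ fails. The ambiguity here is between patterns of the \emph{same} direction, so your proposed enrichment (tracking all four direction types, plus tie-breaks) does not address it.

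The paper avoids choosing a single pattern altogether. For each $F$ in the support of $s$, it records the pair $\iota(F) = (\iota_{++}(F), \iota_{+-}(F))$ of lengths of the longest alternating chain ending at $F$ that increases in both orders, resp.\ increases in $\le_1$ and decreases in $\le_2$. It then maps $s$ to the order ideal of $\N^2$ generated by the image $\iota(\sup(s))$ (represented by its antichain of maxima), equipped with a $\{\pm1\}$-coloring of that antichain by $s$. The crucial Erd\H{o}s--Szekeres-style observation — if $s(F)\ne s(G)$ then $\iota(F)\ne\iota(G)$ — makes the coloring well-defined and yields monotonicity directly: growing the sample can only push the ideal upward, and equality of the ideals forces equality of the colorings. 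Replacing ``a single maximal pattern'' by ``the antichain of all $\iota$-maxima'' is the missing idea; without it, the construction of a monotone $\rho$ cannot be completed, and the proof of the lower bound does not go through.
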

In fact, we prove a slightly more general statement that if $h_{++}$ is the length of alternating pattern increasing in both orders, and $h_{+-}$ increasing in $\leq_1$ and decreasing in $\leq_2$, then there is $u$ witnessing $h_{++} \cdot h_{+-} \geq (n+1)/2$.

The asymptotical sharpness $m\sim \sqrt{n}$ is rather trivial: If we pick an arbitrary antipodal-free cover of $\S^n$ with $n+2$ sets (a minimal number allowed by LS), and ``maximally uncorrelated'' orders $\leq_1$ and $\leq_2$, then any maximal chain, monotonous for both of them (even without the alternation restriction), will have at most $\lceil\sqrt{n+1}\rceil$ elements. The details of this construction, in a multiple orders setting, are explained in \Cref{prop-d-orders} afterwards, and will not be given in this proof.

We finally note that, if faced with the statement of the theorem, it feels natural to try attacking it along the lines of Erd\"os-Szekeres theorem\footnote{It is an appropriate place to thank Fr\'ed\'eric Meunier and Shay Moran for helpful discussions. Notably, both of them had a similar instinct to make a move in this direction.}. However, we have not been able to achieve anything by naively applying or mimicking it here.
\begin{proof}
	Let us fix a ``realizable'' sample $s\in \S_\F$, where by realizable we mean that $s$ is nonempty, moreover, contains $(F_1, -)$ and $(F_2, +)$ for some $F_1, F_2\in \F$. Note that as $\F$ is a cover, any sample in $H^\intr_\F$ is trivially realizable. 
	 
	Now, for $F$ in the support of $s$, let us define $\iota_{++}(F) \in \N$ as the length of the maximal alternating, with respect to $s$, pattern, which is increasing in both $\leq_1$ and $\leq_2$ and ends in $F$. That is, $\iota_{++}(F)$ is the maximal $k$ such that there is $F_1, \dots, F_k \in \sup(s)$ such that:
	\begin{itemize}
		\item $s(F_i) = -s(F_{i-1})$ for $i=1, \dots, k-1$;
		\item $F_i \leq_1 F_{i+1}$ and $F_i \leq_2 F_{i+1}$  for $i=1, \dots, k-1$;
		\item $F_k = F$.
	\end{itemize}
	
	We define $\iota_{+-}(F)$ similarly, except for requiring that the respective pattern is \emph{decreasing} in $\leq 2$. Finally, we define $\iota \colon \sup (s) \rightarrow \N^2$ as $\iota(F) = \left(\iota_{++}(F), \iota_{+-}(F)\right)$. This is illustrated in \Cref{fig-iotas} below.
	
	\begin{figure}[hbt]
	\centering
	\begin{tikzpicture}
	[bpoint/.style={inner sep = 1.5pt, circle,draw,fill=black},
	wpoint/.style={inner sep = 1.5pt, circle,draw,fill=white}]

	\begin{scope}[scale=0.7]
	\draw[->](-0.5, 0)--(6.5, 0);
	\draw[->](0, -0.5)--(0, 6.5);
	\node at (6.5, -0.5) {$\leq_1$};
	\node at (-0.5, 6.5) {$\leq_2$};
	
	\node[wpoint] (1) at (1, 3) {};
	\node[bpoint] (2) at (2, 1) {};
	\node[wpoint] (3) at (3, 5) {};
	\node[bpoint] (4) at (4, 2) {};
	\node[wpoint] (5) at (5, 6) {};
	\node[bpoint] (6) at (6, 4) {};
	
	\draw[dotted] (0, 3)--(1)--(1, 0);
	\draw[dotted] (0, 1)--(2)--(2, 0);
	\draw[dotted] (0, 5)--(3)--(3, 0);
	\draw[dotted] (0, 2)--(4)--(4, 0);
	\draw[dotted] (0, 6)--(5)--(5, 0);
	\draw[dotted] (0, 4)--(6)--(6, 0);
	
	\node[above] at (1) {$(1,1)$};
	\node[above] at (2) {$(1,2)$};
	\node[above] at (3) {$(2,1)$};
	\node[above] at (4) {$(1,2)$};
	\node[above] at (5) {$(2,1)$};
	\node[above] at (6) {$(2,2)$};
	
	\end{scope}

\end{tikzpicture}
	\caption{Function $\iota$ for a given sample. The points are $F\in \sup(s)$, colored with respect to $s(F)$.}
	\label{fig-iotas}
	\end{figure}
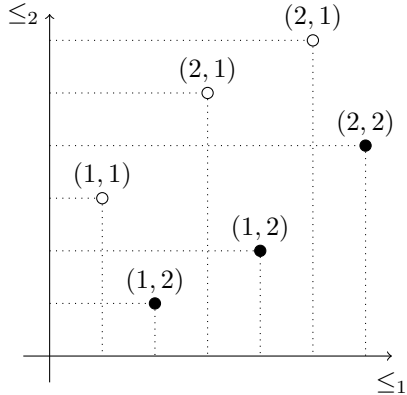
	
	Let us make the following easy observation: 
	\begin{equation}\label{obs-iota}\tag{*}
	\text{\emph{For every $F, G \in \sup(s)$, if $s(F)\neq s(G)$, then $\iota(F)\neq \iota(G)$.}}
	\end{equation} Indeed, for $F\neq G$, one of the four mutually exclusive situations hold: i) $F <_1 G$, $F <_2 G$; ii) $F <_1 G$, $F >_2 G$; iii) $F >_1 G$, $F <_2 G$; or iv) $F >_1 G$, $F >_2 G$. If $s(F)\neq s(G)$ than these situations imply that i) $\iota_{++}(F) < \iota_{++}(G)$; ii) $\iota_{+-}(F) < \iota_{+-}(G)$; iii) $\iota_{+-}(F) > \iota_{+-}(G)$; and iv) $\iota_{++}(F) > \iota_{++}(G)$. In any case, $\iota(F)\neq \iota(G)$, as needed.
	
	Let us treat $\N^2$ as a poset with a usual product order; let $\A(\N^2)$ be the poset of nonempty order-ideals of $\N^2$, where we treat an element of $\A(\N^2)$ as an antichain of its maximal elements. Finally, let $\PP^0$ be a poset of tuples $(a, f)$, where $a\in \A(\N^2)$ is an antichain in $\N^2$ and $f\colon a\rightarrow \{\pm 1\}$ is arbitrary, where the order on $\PP^0$ is defined as $(a, f)\leq (b, g)$ if $a<b$ or $a=b$ and $f=g$. This is illustrated in \Cref{fig-hasse2} below.
	
	\begin{figure}[hbt]
	    \centering
		\begin{tikzpicture}
	[bpoint/.style={inner sep = 1.0pt, circle,draw,fill=black},
	wpoint/.style={inner sep = 1.7pt, circle,draw,fill=white}]

	\begin{scope}[xscale = 0.8]
	\begin{scope}	
			
	\node (11) at (0,0) {$11$};
	\node (12) at (-1,1) {$12$};
	\node (21) at (1,1) {$21$};
	\node (13) at (-2,2) {$13$};
	\node (22) at (0,2) {$22$};
	\node (31) at (2,2) {$31$};
	\node (14) at (-3,3) {$14$};
	\node (23) at (-1,3) {$23$};
	\node (32) at (1,3) {$32$};
	\node (41) at (3,3) {$41$};

	\draw (12)--(11)--(21) (13)--(12)--(22)--(21)--(31)
		(14)--(13)--(23)--(22)--(32)--(31)--(41);

	\node at (0, -1) {$\N^2$};
	\end{scope}
	
	\begin{scope}[xshift=7cm]
	\node (11) at (0,0) {$11$};
	\node (12) at (-1,1) {$12$};
	\node (21) at (1,1) {$21$};
	\node (13) at (-2,2) {$13$};
	\node (1221) at (0,2) {$12,21$};
	\node (31) at (2,2) {$31$};

	\node (22) at (0,3) {$22$};

	\node (14) at (-3,3) {$14$};
	\node (1321) at (-1.25,3) {$13,21$};
	\node (1231) at (1.25,3) {$12,31$};
	\node (41) at (3,3) {$41$};

	\draw (12)--(11)--(21) (13)--(12)--(1221)--(21)--(31)
		(14)--(13)--(1321)--(1221)--(1231)--(31)--(41)
		(1221)--(22);

	\node at (0, -1) {$\A(\N^2)$};
	\end{scope}	
	\end{scope}
	
	\begin{scope}[yshift = -6cm, xshift = 3cm, yscale=2, xscale=1.2]				
	\node (11-) at (-1,0) {$11-$};
	\node (11+) at (1,0) {$11+$};
	\node (12-) at (-2,1) {$12-$};
	\node (12+) at (-1,1) {$12+$};
	\node (21-) at (1,1) {$21-$};
	\node (21+) at (2,1) {$21+$};	
	
	\draw (12-)--(11-)--(12+)--(11+)--(12-)
		 (21-)--(11-)--(21+)--(11+)--(21-);
		 
	\node(12-21-) at (-2.25, 2) {$12-, 21-$};
	\node(12-21+) at (-0.75, 2) {$12-, 21+$};
	\node(12+21-) at (0.75, 2) {$12+, 21-$};
	\node(12+21+) at (2.25, 2) {$12+, 21+$};
	
	\draw (12-21-)--(12-) (12-21-)--(12+) (12-21-)--(21-) (12-21-)--(21+)
	  (12-21+)--(12-) (12-21+)--(12+) (12-21+)--(21-) (12-21+)--(21+)
	  (12+21-)--(12-) (12+21-)--(12+) (12+21-)--(21-) (12+21-)--(21+)
	  (12+21+)--(12-) (12+21+)--(12+) (12+21+)--(21-) (12+21+)--(21+);

	\node(13-) at (-4.75, 2) {$13-$};
	\node(13+) at (-3.75, 2) {$13+$};
	\node(31-) at (3.75, 2) {$31-$};
	\node(31+) at (4.75, 2) {$31+$};

	\draw (13-)--(12-)--(13+)--(12+)--(13-)
		(31-)--(21-)--(31+)--(21+)--(31-);

	\node at (0, -0.5) {$\PP^0$};
	\end{scope}	
\end{tikzpicture}
	    \caption{Posets $\N^2$, $\A(\N^2)$, and $\PP^0$.}
	    \label{fig-hasse2}
	\end{figure}
	
	Let us now define $\rho \colon \S_\F \rightarrow \PP^0$ as follows: $\rho(s) = (a_s, f_s)$, where $a_s$ is the antichain of maximal elements in the image $\iota(s)\subseteq \N^2$, and $f_s \colon a_s\rightarrow \{\pm 1\}$ is the coloring of $a_s$ under $s$. That is, for $x \in a_s$, $f_s(x) = -$ if there is $F\in \sup(s)$ such that $\iota(F) = x$ and $s(F) = -$; similarly, $f_s(x) = +$ if there is $F\in \sup(s)$ such that $\iota(F) = x$ and $s(F) = +$. The observation above ensures that $f_s$ is well-defined. For the pattern in \Cref{fig-iotas}, $\iota(s) = \{(1,1), (1,2), (2,1), (2,2)\}$, $a_s = \{(2,2)\}$, and $f_s = (2,2) \mapsto -$. 
	
	We aim at applying \Cref{lem-Ky-Fan-generalized}. First of all, note that, by the remark in the beginning of the proof, we can assume that there are $(F_1, -), (F_2, +) \in s$. This enables us to treat $\rho$ as a map $\rho \colon H^\intr_\F \rightarrow \PP$, where $\PP = \PP^0 - \{(11, -), (11, +)\}$. Doing this is rather optional, but it bumps the rank of the element we get by $1$. Now, trivially, $\rho(s)$ does not collapse antipodal samples: indeed, if $\rho(s) = (a, f)$ then $\rho(-s) = (a, -f) \neq (a, f)$. What is left to check is that $\rho$ is monotone. 
	
	Suppose not, that is, there are samples $s_1 \leq s_2$ such that for $\rho(s_1) = (a_1, f_1)$ and $\rho(s_2) = (a_2, f_2)$, it holds $(a_1, f_1)\not\leq (a_2, f_2)$. First note that $s_1 \leq s_2$ trivially implies $a_1\leq a_2$. If $a_1\lneq a_2$ then, by construction of $\PP$, $(a_1, f_1)\leq (a_2, f_2)$. So, to invalidate the latter, we need to have $a = a_1 = a_2$ and $f_1 \neq f_2$, that is, there is some $x\in a$ such that $f_1(x) \neq f_2(x)$. Let $F_1 \in \sup(s_1)$ and $F_2\in \sup(s_2)$ be the elements in the cover witnessing $x\in a_1$ and $x\in a_2$ respectively, that is, such that $\iota^1(F_1) = \iota^2(F_2) = x$. Note that here $\iota^1$ is the function $\iota$, constructed for the sample $s_1$, and similarly for $s_2$. By the construction of $f_1$ and $f_2$, $s_1(F_1) = f_1(\iota_1(F_1)) = f_1(x) \neq f_2(x) = f_2(\iota_2(F_2)) = s_2(F_2)$.
	
	Now, as $s_1 \leq s_2$, $F_1 \in \sup(s_1)\subseteq \sup(s_2)$. Thus, $\iota^2(F_1)$ is well-defined, moreover, trivially, $\iota^2(F_1)\geq \iota^1(F_1) = x$. However, $x\in a^2$ is maximal in the $\iota^2$-image of $\sup (s_2)$, and so $\iota^2(F_1) = x = \iota^2(F_2)$. But $s_2(F_1) = s_1(F_1) \neq  s_2(F_2)$, contradicting to~\eqref{obs-iota}.

	Now, by \Cref{lem-Ky-Fan-generalized}, we get $u\in \SS^n$ such that for $s = s(u)$ and $w=\rho(s)$, $\rk_{\PP}(w) \geq (n+1)/2$. In terms of $\PP^0$, which is easier to deal with, $\rk_{\PP^0}(w) \geq (n+3)/2$. Also, by construction of $\PP^0$, for $w = (a, f)$, $\rk_{\PP^0}(w) = \rk_{\A(\N^2)}(a) \geq (n+3)/2$. For $a\in \A(\N^2)$, let $h_{++}(a) = \max \{i~|~i,j\in a\}$, and $h_{+-}(a) = \max \{j~|~i,j\in a\}$; note that, once we unpack the definitions, $h_{++}$ is the size of the maximal alternating chain increasing in both $\leq_1$ and $\leq_2$, witnessed by $u$, and similarly for $h_{+-}$. Trivially, $a\leq \{(h_{++}(a), h_{+-}(a))\}$, and hence $\rk_{\A(\N^2)}\left(\{(h_{++}(a), h_{+-}(a))\}\right) = h_{++}(a) \cdot h_{+-}(a) - 1 \geq (n+3)/2$. Thus, $h_{++}(a) \cdot h_{+-}(a) \geq (n+1)/2$. Thus, $\max(h_{++}(a), h_{+-}(a)) \geq \left((n+1)/2\right)^{1/2}$, as needed.
\end{proof}

\begin{theorem}[Ky Fan's covering lemma for multiple orders]\label{th-kyfan-covers3}
	For an $n$-cover $\F$ and $d\geq 1$, let $\leq_i$, for $i=1, \dots, d$ be $d$ linear orders on $\F$. Then there is $u\in \SS^n$ witnessing an alternating pattern of length at least $m = \left((n+1)/2\right)^{1/2^{d-1}}$, monotone with respect to all $d$ of these orders. 

    Moreover, this is asymptotically sharp: there is an $n$-cover such that the largest alternating monotone pattern with respect to it has length at most $\left\lceil(n+2)^{D}\right\rceil$, for $D = 1/2^{d-1}$.
\end{theorem}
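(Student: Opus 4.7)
The plan is to mimic the proof of \Cref{th-kyfan-covers2} with $\N^2$ replaced by $\N^{2^{d-1}}$, whose coordinates are indexed by sign vectors $\epsilon\in\{+,-\}^{d-1}$ recording the direction of monotonicity with respect to $\leq_2,\dots,\leq_d$; we only need to track chains increasing in $\leq_1$, since the theorem merely asks for monotonicity in each order without prescribing its direction.

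For a realizable sample $s\in H^\intr_\F$ and $F\in\sup(s)$, I define $\iota_{+\epsilon}(F)$ as the length of the longest alternating (w.r.t.\ $s$) chain $F_1,\dots,F_k=F$ with $F_j\leq_1 F_{j+1}$ and, for each $i=2,\dots,d$, $F_j\leq_i F_{j+1}$ if $\epsilon_{i-1}=+$ and $F_j\geq_i F_{j+1}$ otherwise; then $\iota(F)=(\iota_{+\epsilon}(F))_\epsilon\in\N^{2^{d-1}}$. The generalization of~\eqref{obs-iota} reads: for $F\neq G$ in $\sup(s)$ with $s(F)\neq s(G)$, one has $\iota(F)\neq\iota(G)$. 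The proof mirrors the two-order case: WLOG $F<_1 G$; letting $\epsilon_{i-1}=+$ iff $F<_i G$ for $i\geq 2$, one may append $G$ to the $\iota_{+\epsilon}$-optimal chain at $F$ (monotonicity is preserved by the choice of $\epsilon$, alternation by $s(F)\neq s(G)$), so $\iota_{+\epsilon}(G)>\iota_{+\epsilon}(F)$.

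With this observation in hand, I would build $\A(\N^{2^{d-1}})$, $\PP^0$, $\PP$ and $\rho$ verbatim as in the two-order proof: $\A(\N^{2^{d-1}})$ is the poset of nonempty finitely-generated order-ideals (encoded by their maximal antichains); $\PP^0$ is the poset of pairs $(a,f)$ with $f\colon a\to\{\pm 1\}$, ordered by $(a,f)\leq(b,g)$ iff $a<b$ or ($a=b$ and $f=g$); $\PP$ drops the two minimal elements of $\PP^0$; and $\rho(s)=(a_s,f_s)$, where $a_s$ is the antichain of maxima of $\iota(\sup s)$ and $f_s(x)=s(F)$ for any $F\in\sup s$ with $\iota(F)=x$, well-defined by the observation. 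Monotonicity of $\rho$ and non-collapse on antipodes ($\rho(-s)=(a_s,-f_s)$) carry over verbatim. Applying \Cref{lem-Ky-Fan-generalized} yields $u\in\SS^n$ with $\rk_{\A(\N^{2^{d-1}})}(a_{s(u)})\geq(n+3)/2$, and since $a_{s(u)}\leq\{(h_1(a),\dots,h_{2^{d-1}}(a))\}$ in $\A(\N^{2^{d-1}})$ with $h_\epsilon(a)$ the maximum $\epsilon$-coordinate of $a$, and the principal ideal $\{(h_1,\dots,h_{2^{d-1}})\}$ has rank $\prod_\epsilon h_\epsilon-1$, we obtain $\prod_\epsilon h_\epsilon(a)\geq(n+1)/2$, hence $\max_\epsilon h_\epsilon(a)\geq((n+1)/2)^{1/2^{d-1}}$. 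This produces an alternating chain of this length, increasing in $\leq_1$ and monotone in each of $\leq_2,\dots,\leq_d$ along the directions prescribed by the maximizing $\epsilon^*$, as required.

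For sharpness I would defer the construction to the forthcoming \Cref{prop-d-orders}: on a minimal antipodal-free cover of $\SS^n$ by $n+2$ sets, impose $d$ ``maximally uncorrelated'' orders obtained, e.g., by embedding the index set into a grid $[k]^{2^{d-1}}$ and reading off $d$ generic coordinate-like projections, so that every chain simultaneously monotone in all $d$ orders has length at most $\lceil(n+2)^{1/2^{d-1}}\rceil$. The main obstacle is pure bookkeeping---carefully indexing the $2^{d-1}$ sign patterns, verifying that each oppositely-signed pair in $\sup(s)$ is separated by a uniquely determined coordinate, and tracking the constant shifts between $\PP$, $\PP^0$ and $\A(\N^{2^{d-1}})$---after which the argument is a mechanical lift of the two-order case.
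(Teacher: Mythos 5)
Your proposal is correct and follows essentially the same route as the paper: the index set $\{+,-\}^{d-1}$ is exactly the paper's set $R$ of sign functions $r\colon[d]\to\{\pm1\}$ with $r(1)=1$, and the definitions of $\iota$, the posets $\A(\N^{2^{d-1}})$, $\PP^0$, $\PP$, the map $\rho$, the appeal to \Cref{lem-Ky-Fan-generalized}, the rank computation via the principal ideal, and the deferral of sharpness to \Cref{prop-d-orders} all match the paper's argument. The only differences are notational.
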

\begin{proof}
	The proof follows the same pattern as the one in \Cref{th-kyfan-covers2}. We start by fixing a realizable sample $s\in \S_\F$.
	
	Let us define $R$ as a set of functions $r\colon [d]\rightarrow \{\pm 1\}$ such that $r(1) = 1$; note that $|R| = 2^{d-1}$. Now, for $r\in R$, we define $\iota_r \colon \sup(s) \rightarrow \N$ as the length of the maximal alternating, with respect to $s$, pattern, which is increasing in $\leq_{r(i)\cdot i}$ for all $i\in [d]$, and ends in $F$; here $\leq_{-i}$ is defined as the order, inverse to $\leq_i$.  That is, $\iota_r(F)$ is the maximal $k$ such that there is $F_1, \dots, F_k \in \sup(s)$ such that:
		\begin{itemize}
			\item $s(F_i) = -s(F_{i-1})$ for $i=1, \dots, k-1$;
			\item $F_i \leq_{r(j)\cdot j} F_{i+1}$ for all $i=1, \dots, k-1$ and $j\in [d]$;
			\item $F_k = F$.
		\end{itemize}
	Finally, we define $\iota \colon \sup (s) \rightarrow \N^R$ as $\iota(F) = \left(\iota_r(F)\right)_{r\in R}$. Similarly to how it was before, the following holds:
		\begin{equation}\label{obs-iota2}\tag{*}
		\text{\emph{For every $F, G \in \sup(s)$, if $s(F)\neq s(G)$, then $\iota(F)\neq \iota(G)$.}}
		\end{equation}	
		
	Again, similarly, we consider posets $\N^R$, $\A(\N^R)$, and $\PP^0$, where the latter is defined as a poset of tuples $(a, f)$, for $a\in \A(\N^R)$ an antichain in $\N^R$ and $f\colon a\rightarrow \{\pm 1\}$ arbitrary, where the order on $\PP^0$ is $(a, f)\leq (b, g)$ if $a<b$, or $a=b$ and $f=g$. Also, $\PP$ is  defined as $\PP^0 - \{(e, -), (e, +)\}$, where $e$ is all-$1$ function on $R$. And we define $\rho \colon \S_\F \rightarrow \PP$ as $\rho(s) = (a_s, f_s)$, where $a_s$ is the antichain of maximal elements in the image $\iota(s)\subseteq \N^R$, and $f_s \colon a_s\rightarrow \{\pm 1\}$ is such that $f_s(\iota(F)) = s(F)$ whenever $\iota(F)\in a_s$.
	
	Now, same as before, we apply \eqref{obs-iota2} to ensure that $\rho$ is well-defined and monotone.  Then, by \Cref{lem-Ky-Fan-generalized}, we get $u\in \SS^n$ such that for $w = (a,f) = \rho(s(u))$ we get $\rk_{\PP^0}(w) = \rk_{\A(\N^R)}(a) \geq (n+3)/2$. Then, for $r\in R$,  we define $h_r(a) = \max \{x(r)~|~x\in a\}$; recall that $a$ is an antichain in $\N^R$, and so $x\in a$ is an $R$-dimensional vector, so $x(r)$ makes sense. As before, $h_r$ is the size of the maximal alternating chain increasing in every $\leq_{r(i)\cdot i}$, for $i\in [d]$, which is witnessed by $u$. Then 
	 
	 \begin{align*}
	 	\rk_{\A(\N^R)} \left( \left\{ \left(h_r(a) \right)_{r\in R} \right\} \right)
	 		&= \prod_{r\in R} h_r(a) - 1 \geq  \rk_{\A(\N^R)}(a)
	 			\geq (n+3)/2,
	 \end{align*}
	 trivially implying 

	 \begin{align*}
	 		\max_{r\in R} h_r(a) &\geq \left((n+1)/2\right)^{1/|R|}
	 			= \left((n+1)/2\right)^{1/2^{d-1}}.
	 \end{align*}
\end{proof}

The fact that Theorems ~\ref{th-kyfan-covers2} and~\ref{th-kyfan-covers3} are asymptotically sharp easily follows from the following statement. Its proof, modulo the statement, is straightforward and is left as an exercise.
\begin{proposition}\label{prop-d-orders}
	For $d, m\geq 1$, let us define:
	\begin{itemize} 
		\item $R$ as the set of functions $r\colon [d]\rightarrow \{\pm1\}$, $|R| = 2^{d-1}$. We consider $R$ to be linearly ordered lexicographically;
		\item $X$ as $[m]^R$, $|X| = m^{2^{d-1}}$;
		\item linear order $\leq_i$ on $X$, for $i=1, \dots, d$, as lexicographic order on $X$, which, on $r$'th coordinate, is either standard linear order on $[m]$ whenever $r(i) = +$, or inverse whenever $r(i) = -$.
	\end{itemize}	
	
	In explicit terms, $\leq_i$ is defined as follows. For $x\neq y \in X = [m]^{R}$, let $r \in R$ be their first, with respect to order on $R$, distinct coordinate. Then $x\leq_i y$ if $x_r < y_r$ and $r(i) = +$ or if $x_r > y_r$ and $r(i) = +$. Alternatively, if $r(i) \cdot x_r < r(i) \cdot y_r$.
	
	Then any sequence in $X$, monotone, in the sense of Theorems ~\ref{th-kyfan-covers2} and~\ref{th-kyfan-covers3}, with respect to all $\leq_i$, for $i=1, \dots, d$, has at most $m \sim |X|^{1/2^{d-1}}$ elements.
\end{proposition}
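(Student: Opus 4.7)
The plan is to fix a strictly monotone sequence $x_1,\dots,x_k\in X=[m]^R$ — monotone in each $\leq_i$, with direction $\epsilon_i\in\{\pm 1\}$ ($+1$ if increasing in $\leq_i$, $-1$ if decreasing) — and to locate a single coordinate $r^{\star}\in R$ on which the projection $j\mapsto (x_j)_{r^{\star}}\in[m]$ is already strictly monotone. Such a projection immediately forces $k\leq m=|X|^{1/2^{d-1}}$, which is the bound stated in Proposition~\ref{prop-d-orders}.

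The heart of the argument is a purely local analysis of a consecutive pair $x_j,x_{j+1}$. Let $r\in R$ be their lex-first differing coordinate and set $s=\mathrm{sign}\bigl((x_{j+1})_r-(x_j)_r\bigr)\in\{\pm 1\}$. Using the ``alternatively'' form of the definition of $\leq_i$, namely $x<_iy\iff r(i)\cdot x_r<r(i)\cdot y_r$, one reads off that $x_j<_ix_{j+1}\iff r(i)=s$, with the reversed equivalence for $x_j>_ix_{j+1}$. Requiring monotonicity in $\leq_i$ with direction $\epsilon_i$ then forces
\[
r(i)\;=\;\epsilon_i\cdot s,\qquad i\in[d].
\]
Plugging $i=1$ into this and using the defining constraint $r(1)=1$ pins down $s=\epsilon_1$; substituting back gives $r(i)=\epsilon_1\epsilon_i$ for every $i$. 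So both $r$ and $s$ are in fact independent of $j$: the lex-first differing coordinate is one and the same element $r^{\star}\in R$ with $r^{\star}(i)=\epsilon_1\epsilon_i$, and the step at $r^{\star}$ always has sign $\epsilon_1$.

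The conclusion is then immediate: $j\mapsto (x_j)_{r^{\star}}$ is a strictly $\epsilon_1$-monotone map $[k]\to[m]$, so $k\leq m$, which is exactly the desired bound.

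The only step that requires any care is the sign bookkeeping that extracts $r(i)=\epsilon_i s$ from the lex definition of $\leq_i$; once that equation is in hand, the identification of $r^{\star}$ and the length bound are just reading off its consequences. I do not anticipate any deeper obstacle, which is consistent with the paper's remark that the proof ``modulo the statement, is straightforward''.
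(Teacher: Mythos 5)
The paper leaves the proof of Proposition~\ref{prop-d-orders} as an exercise, so there is no in-paper proof to compare against, but your argument is correct and is the natural one. Your key observation — that for any consecutive pair the constraints $r(i)=\epsilon_i s$ together with $r(1)=1$ pin down a single $r^\star\in R$ and a single step-sign $s=\epsilon_1$ independent of $j$, so that $j\mapsto (x_j)_{r^\star}$ is strictly monotone into $[m]$ — is exactly the bookkeeping the authors presumably had in mind, and you handle the typo in the paper's explicit description of $\leq_i$ correctly by working from the $r(i)\cdot x_r<r(i)\cdot y_r$ form.
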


\bibliographystyle{plain}
\bibliography{bib}

\end{document}